\newcommand{\tr}{\operatorname{tr\,}}
\newcommand{\id}{\operatorname{id\,}}
\newtheorem{remark}{Remark}
\newtheorem{theorem}{Theorem}
\newtheorem{definition}{Definition}
\newtheorem{corollary}{Corollary}
\title{\LARGE \bf
Integral gains for non-autonomous Wazewski systems
}
\author{Ivan Atamas, Sergey Dashkovskiy and Vitalii Slynko$^{1}$
\thanks{$^{1}$Authors are with the Institute of Mathematics,
        University of Würzburg, Emil-Fischer-Str. 40, 97074 Würzburg, Germany, Email: name.sutname@uni-wuerzburg.de}
}
\begin{document}

\maketitle
\thispagestyle{empty}
\pagestyle{empty}

\begin{abstract}

In this work we consider linear non-autonomous systems of Wazewski type on Hilbert spaces and provide a
new approach to study their stability properties by means of a 
decomposition into subsystems and conditions implied on the interconnection 
properties. These conditions are of the small-gain type but the appoach is based on a conceptually new notion which we call integral gain. This notion is introduced for the first time in this paper. We compare our approach with known results from the literature and demonstrate advantages of our results.

\end{abstract}

\section{Introduction}


Non-autonomous systems appear in modeling of nonstationary processes or, for example, in case of time dependent stabilizing controllers, see e.g.,  \cite{LLKL18},\cite{SlB21}, \cite{STB21}, \cite{SuD22}, \cite{ZaD24}. In modern applications related to networks such systems can be of large-scale and have a time varying interaction structure, which may lead to a destabilization even if all disconnected are stable.

A standard approach to study stability properties of a large system is a
 decomposition \cite{LiS10} to several (say $r\in\mathbb N$) smaller subsystems, for which these properties are easier to verify. For example, one assumes that Lyapunov-type functions $V_1,\dots, V_r$ are available for the disconnected subsystems and tries to construct a Lyapunov function for the whole system as their linear  \cite{LiS10} or nonlinear \cite{DRW10} combination. This construction is possible under suitable restrictions of the interaction intensity between subsystems such as small-gain conditions. For two interconnected non-autonomous systems such results are available in \cite{KaT04}.
 
 Another method \cite{Bel62},\cite{Mat62} is to consider the vector Lyapunov function $V=(V_1,\dots,V_r)^T$ and to construct a differential inequality of the form $\dot V\le g(t,V)$  with a quasi-monotone vector-function $g$. The latter property, also called Wazewski property, gives the possibility to pass to a comparison system, stability of which guarantees stability of the original interconnection. In many cases this comparison system is of Wazewski type (that is with quasi-monotone $g$), for which monotonicity of solutions with respect to initial states holds \cite{RHL77}. This implies the positivity property and makes the stability investigation of the comparison system essentially easier, than of the original one. Let us note that such properties as positivity and monotonicity appear in many dynamical systems as
a consequence of physical, biological or economic laws. Hence stability investigation of Wazevski systems is of interest for itself and not only as an intermediate step. In this work we consider linear non-autonomous Wazewski systems, for which almost no results exist.

A problem which we face in case of non-autonomous systems is that interaction between subsystems can be strong (with high gains) on certain time intervals, which can make the application of the known small-gain conditions impossible, however if these intervals of intensive interactions are short or even confined to a finite time, then it can be expected that the whole system remains stable. 

To overcome this difficulty we introduce the notion of integral gains in this work for the first time. These gains coincide with usual gains in case of autonomous systems. Moreover we use a completely different approach (see \eqref{SLF}) for a construction of a Lyapunov function for the interconnection. 
Based on these ideas we derive a new stability condition of the small-gain type and demonstrate its advantages over the existing results by means of examples. In particular, our approach can be applied in certain situations, where ather results are not applicable.

The paper is organized as follows. All necessary definitions and notation as well as the problem statement are introduced in in Section \ref{preliminaries}. The main result and a sketch of its proof are provided in Section \ref{main-result}. Illustrative examples are provided in Sections \ref{ex1} and \ref{ex2}. Concluding remarks are collected in Section \ref{conclusions}.

\section{Preliminaries}\label{preliminaries}
In this work we restrict our investigations to the case of two interacting subsystems. To this end let $\mathcal H={\mathcal H_1}\bigoplus{\mathcal H_2}$ be the direct product of two real Hilbert spaces. For some $\tau\in\mathbb R$ let $A\,\,:\,[\tau,\infty)\to L(\mathcal H)$ be strongly measurable and Bochner integrable on any finite interval $[a,b]\subset[\tau,\infty)$ function.
We consider the following non-autonomous differential equation
\begin{equation}\label{1}
\gathered
\dot x=A(t)x,\quad x\in\mathcal H,\quad t_0\in[\tau,\infty)
\endgathered
\end{equation}
with initial condition $x(t_0)=x_0$, $x_0\in\mathcal H$. We say that $x$ is a solution to this Cauchy problem if $x\,:[t_0,\infty)\to\mathcal H$ is continuous and satisfies the integral equation
\begin{equation}\label{1*}
x(t)=x_0+\int\limits_{t_0}^tA(s)x(s)\,ds,
\end{equation}
where the integral is in the sense of Bochner.
To stress the initial data we also write $x(t)=x(t;t_0,x_0)$. This solution can be represented as $x(t)=x(t;t_0,x_0)=\Omega(t;t_0)x_0$, by means of the evolution operator $\Omega(t,t_0)$ of the system \eqref{1}, which satisfy the following properties, see e.g., \cite[Chapter 3.1]{DaK74}:  $\Omega\in C([\tau,\infty)\times[\tau,\infty);L(\mathcal H))$ and
\\
1) $\Omega(t,t)=\id$, $\Omega(t,t_0)=\Omega(t,s)\Omega(s,t_0)$ for all $t$, $s$, $t_0\in[\tau,\infty)$;
\\
2) and for almost all $t,s\in[\tau,\infty)$ it holds that
\begin{equation}\label{1**}
\gathered
\frac{\partial}{\partial t}\Omega(t,s)=A(t)\Omega(t,s),\\ 
\frac{\partial}{\partial s}\Omega(t,s)=-A(s)\Omega(t,s).
\endgathered
\end{equation}
A non-empty, closed, convex subset $K\subset\mathcal H$ is called a cone, if $\mathbb R_+K\subset K$ and $K\bigcap(-K)=\{0\}$ hold. 
A cone $K$ is called generating if any $x\in\mathcal H$ can be written as $x=x^+-x^-$ for some
$x^{\pm}\in K$. In a Hilbert space any generating cone is nonflat, that is there exists a constant $a_K>0$, independent on $x$, such that in the representation $x=x^+-x^-$ the vectors $x^{\pm}$ can be chosen so that 
\begin{equation}\label{nonflat}
\|x^{\pm}\|\le a_K\|x\|.    
\end{equation}
The set $K^*=\{f\in L(\mathcal H;\mathbb R)\,:\,f(K)\subset\mathbb R_+\}\subset\mathcal H^*=\mathcal H$
is called adjoint cone. If $K$ is generating, then $K^*$ is normal, i.e., there is a constant $b_K>0$ such that $x\in K$, $y-x\in K$ imply $\|x\|\le b_K\|y\|$. A cone $K$ is called selfadjoint if $K^*=K$. In a Hilbert space any selfadjoint cone is normal \cite{KLS89}.
\begin{definition}
Let $K$ be a generating and selfadjoint cone in $\mathcal H$.
    System \eqref{1} is called Wazewski \cite{Mar11} with respect to $K$ (also called monotone \cite{LaP94}) if its evolution operator $\Omega$ satisfies $\Omega(t,s)K\subset K$ for all $t\ge s\ge\tau$.
\end{definition}
\begin{definition}
We say that \eqref{1} is stable in $K$ if for any $\varepsilon>0$ and any $t_0\ge\tau$ there is a $\delta=\delta(\varepsilon,t_0)>0$ such that
$x_0\in B_{\delta}\cap K\implies \|x(t;t_0,x_0)\|<\varepsilon,$ $t\ge t_0$;
if, in addition, the $\delta$ can be chosen independent on $t_0$, then \eqref{1} is called uniformly stable in $K$; 

asymptotically stable in $K$ if it is stable in $K$ and there is $\eta=\eta(t_0)>0$ such that $x_0\in B_{\eta}\cap K\implies \lim_{t\to\infty}\|x(t;t_0,x_0)\|=0$; if, in addition, $\eta$ can be chosen independent on $t_0$, then \eqref{1} is called uniformly asymptotically stable in $K$.

If any of the above properties holds with $\mathcal H$ on the place of $K$, then we drop ''in $K$'' in their definitions.
\end{definition}
Let system \eqref{1} be decomposed in two subsystems as follows
\begin{equation}\label{2}
\dot x_i=A_{ii}(t)x_i+A_{ij}(t)x_j,\quad i\ne j, \quad i,j=1,2,
\end{equation}
with $x_i\in\mathcal H_i$, $A_{ij}\in L(H_j,H_i)$ for $i,j=1,2$.
Let $K_i$ be a solid selfadjoint cone in $\mathcal H_i$, then $K=K_1\bigoplus K_2$ is a solid and selfadjoint cone in $\mathcal H$.

Let $\Omega_i\in C([\tau,\infty)\times[\tau,\infty);L(\mathcal H_i))$ denote the evolution operator of the decoupled subsystem
\begin{equation}\label{3}
\gathered
\dot y_i=A_{ii}(t)y_i,\quad y_i\in\mathcal H_i,\quad i=1,2
\endgathered
\end{equation}

It can be verified by definition that \eqref{1}, written as \eqref{2}, is a Wazewski system for the cone $K$,
if and only if 
\begin{equation}\label{wazewski-conditions}
\Omega_i(t,s)K_i\subset K_i \quad \text{ and }\quad A_{ij}(t)K_j\subset K_i,\quad t\ge s\ge \tau.    
\end{equation}
\begin{definition}\label{positiveness}
An operator $O\in L(\mathcal H_j;\mathcal H_i)$ is called positive, if it satisfies $OK_j\subset K_i$ and it is denoted by $O\ge 0$.
\end{definition}
For the evolution operator $\Omega_i$ of \eqref{3} let $\alpha_i$, $\beta_i$, $\gamma_i$, $\delta_i\in C([\tau,\infty);\mathbb R_{>0})$ be such that 
\begin{equation}\label{4}
\gathered
\|\Omega_i(t,s)\|\le \alpha_i(t)\beta_i(s),\quad t\ge s\ge \tau,\\
\|\Omega_i(t,s)\|\le (\gamma_i(t))^{-1}(\delta_i(s))^{-1},\quad s\ge t\ge \tau,
\endgathered
\end{equation}
for example we can fix any $p\in(s,t)$ and take  $\alpha_i(t)=\|\Omega_i(t,p)\|$, $\beta_i(s)=\|\Omega_i(p,s)\|$ due to $\Omega_i(t,s)=\Omega_i(t,p)\Omega_i(p,s)$.

Let $q_i\in C([\tau,\infty);\mathbb R_{>0})$ be suitable weight functions guaranteeing convergence of the next integrals for $t\in[\tau,\infty)$
\begin{equation}\label{weights_q_i}
\gathered
\phi_i(t):=\gamma_i^2(t)\int\limits_t^{\infty}q_i(s)\delta_i^2(s)\,ds, \\
g_i(t):=\beta_i^2(t)\int\limits_t^{\infty}q_i(s)\alpha_i^2(s)\,ds.
\endgathered
\end{equation}
\section{Main Results}\label{main-result}
For simplicity and clearness in this work we consider the case of $r=2$ interconnected subsystems in \eqref{2}. 
An extension for $r\in\mathbb N$ will developed elsewhere.
We introduce the following notation for $1\le i\neq j\le2$:
linear weighted integral gains for the interconnection \eqref{2} are defined as
\begin{equation}\label{gains-ii}
\gathered
\pi_{ii}(t_0)=2\sup\limits_{t\ge t_0}\omega_i(t)\beta_i^2(t)\\
\times\int\limits_{t}^{\infty}\alpha_i(s)\|A_{ji}(s)\|\beta_j(s)
\int\limits_s^{\infty}\frac{\alpha_i(p)\alpha_j(p)\|A_{ij}(p)\|}{\omega_i(p)}\,dp\,ds,
\endgathered
\end{equation}
\begin{equation}\label{gains-ij}
\gathered
\pi_{ji}(t_0)=2\sup\limits_{t\ge t_0}\omega_i(t)\beta_i^2(t)\\
\times\int\limits_{t}^{\infty}\alpha_i(s)\|A_{ji}(s)\|\beta_j(s)
\int\limits_s^{\infty}\frac{\alpha_i(p)\alpha_j(p)\|A_{ji}(p)\|}{\omega_j(p)}\,dp\,ds,
\endgathered
\end{equation}
where $\omega_i$ are suitable weight functions, which can help to enable convergence of the integrals. Also for $f\in C([t_0,T];L(\mathcal H_i))$ we introduce the weighted norm 
\begin{equation}
    \|f\|_{\omega_i,T}:=\max\limits_{t\in [t_0,T]}\omega_i(t)\|f(t)\|.
\end{equation}
\begin{theorem}\label{th:main-result}
Let \eqref{1} be a Wazewski system with respect to a selfadjoint solid cone $K$ and written as \eqref{2} with $r=2$. Let $\alpha_i, \beta_i, \gamma_i, \delta_i$ be as in \eqref{4} and $\phi_i, g_i, q_i, \omega_i, \pi_{ij}$ as in \eqref{weights_q_i},\eqref{gains-ii},\eqref{gains-ij}.
If the spectral radius of the matrix $\Pi(t_0)\in\mathbb R^{2\times2}$ defined by the weighted integral gains $\pi_{ij}(t_0)$ satisfies 
\begin{equation}\label{ISGC}
r_\sigma(\Pi(t_0))<1
\end{equation}
then solutions to \eqref{1} satisfy the estimate
\begin{equation}\label{estimate-for-solutions}
\|x(t;t_0,x_0)\|\le  2a_K\sqrt{\frac{h(t_0,t_0)}{\phi(t)}}\exp\Big(-\int\limits_{t_0}^t\frac{q(s)}{2h(s,t_0)}\,ds\Big)\|x_0\|,
\end{equation}
where $a_K$ is from \eqref{nonflat}, $q(t):=\min\{q_1(t),q_2(t)\}$, $\phi(t):=\min\{\phi_1(t),\phi_2(t)\}$ and
$h(t,t_0):=\max\{h_{11}(t,t_0)+h_{12}(t,t_0),h_{22}(t,t_0)+h_{12}(t,t_0)\}$ with
\begin{equation*}
\gathered
h_{ii}(t,t_0):=\frac{1}{1-\tr\Pi(t_0)+\det\Pi(t_0)}\\
\times\Big((1-\pi_{ii}(t_0))\frac{\|g_i\|_{\omega_i,t}}{\omega_i(t)}+
\pi_{ij}(t_0)\frac{\|g_j\|_{\omega_j,t}}{\omega_i(t)}\Big),\\
h_{12}(t,t_0):=
\beta_1(t)\beta_2(t)\int\limits_{t}^{\infty}\alpha_1(s)\alpha_2(s)\Big(\|A_{12}(s)\|h_{11}(s,t_0)\\
+\|A_{21}(s)\|h_{22}(s,t_0)\,\Big)\,ds,
\endgathered
\end{equation*}
and all $h_{ij}$ are assumed to be well defined for any $t\ge t_0$. 

Moreover, if $\inf_{t\ge t_0}\phi(t)>0$ and $h(t_0,t_0)<\infty$, then \eqref{1} is stable. If, in addition, 
either $\phi(t)\to \infty$ for $t\to+\infty$ or
\begin{equation}\label{stabC}
\int\limits_{t_0}^t\frac{q(s)}{h(s,t_0)}\,ds\to+\infty,\quad t\to\infty,
\end{equation}
then \eqref{1} is asymptotically stable.
If, in addition, $\sup_{t_0\ge \tau}h(t_0,t_0)<+\infty$ holds,
then \eqref{1} is uniformly asymptotically stable.
\end{theorem}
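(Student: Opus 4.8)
The plan is to build a common Lyapunov function for the interconnection \eqref{2} of the form hinted at by \eqref{SLF}, namely a weighted combination of quadratic-type functionals built from the trajectories of the decoupled subsystems \eqref{3}. Concretely, for each subsystem I would introduce functionals of the shape $V_i(t,x_i) = \int_t^\infty q_i(s)\,\|\Omega_i(s,t)x_i\|^2\,ds$ (or a variance/weighted version using $\alpha_i,\beta_i,\gamma_i,\delta_i$), so that along trajectories of the decoupled system $\dot V_i \le -q_i(t)\|x_i\|^2$ plus boundary terms controlled by \eqref{4} and \eqref{weights_q_i}. The functions $\phi_i$ and $g_i$ in \eqref{weights_q_i} are precisely the two-sided bounds $\phi_i(t)\|x_i\|^2 \le V_i(t,x_i) \le g_i(t)\|x_i\|^2$ that such a construction produces. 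The first block of work is to make this decoupled estimate rigorous in the Hilbert-space Bochner setting, using the evolution-operator properties 1)--2) and \eqref{1**} to differentiate $V_i$ almost everywhere.

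Next I would pass to the full system \eqref{2}, where the cross terms $A_{ij}(t)x_j$ contribute extra terms to $\dot V_i$ of size controlled by $\|A_{ij}(t)\|$ and the bounds in \eqref{4}; iterating the variation-of-constants formula once produces exactly the double integrals appearing in the gains $\pi_{ii}(t_0)$, $\pi_{ji}(t_0)$ in \eqref{gains-ii}--\eqref{gains-ij}, with the weights $\omega_i$ absorbing the inner integral. Here is where the Wazewski/positivity hypothesis enters: because $\Omega(t,s)K\subset K$ and each $A_{ij}(t)\ge 0$ in the sense of Definition \ref{positiveness}, all the relevant scalar quantities are comparable to norms of positive vectors, and the nonflatness constant $a_K$ from \eqref{nonflat} together with normality of $K$ lets me bound cross-products of $x_1,x_2$ by the individual $\|x_i\|^2$ without losing signs. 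The outcome is a system of scalar integral inequalities for the quantities $\|g_i\|_{\omega_i,t}$ whose "coefficient matrix" is $\Pi(t_0)$; solving this linear system (a $2\times2$ Neumann-series / Gaussian-elimination computation, legitimate precisely because $r_\sigma(\Pi(t_0))<1$, equivalently $1-\tr\Pi(t_0)+\det\Pi(t_0)>0$ with the off-diagonal terms nonnegative) yields the closed-form bounds $h_{ii}(t,t_0)$ and then $h_{12}(t,t_0)$, hence $h(t,t_0)$.

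With these bounds in hand, I would assemble $V(t,x) := V_1(t,x_1) + V_2(t,x_2)$ (or a $\min$/$\max$ combination matching the definition of $h$) and derive the differential inequality $\dot V \le -\frac{q(t)}{h(t,t_0)}\,V$ along solutions of \eqref{1}, which is the scalar comparison inequality. Integrating it gives $V(t,x(t)) \le V(t_0,x_0)\exp(-\int_{t_0}^t \frac{q(s)}{h(s,t_0)}\,ds)$, and combining with the two-sided bounds $\phi(t)\|x(t)\|^2 \le V(t,x(t))$ and $V(t_0,x_0)\le h(t_0,t_0)\|x_0\|^2$, together with the factor $a_K$ needed to drop the restriction to $K$ (write $x_0 = x_0^+ - x_0^-$ and use linearity plus $\|x_0^\pm\|\le a_K\|x_0\|$), produces exactly \eqref{estimate-for-solutions}. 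The stability, asymptotic stability, and uniform asymptotic stability claims then follow by reading off \eqref{estimate-for-solutions}: boundedness of the prefactor when $\inf_t\phi(t)>0$ and $h(t_0,t_0)<\infty$; decay to zero when either $\phi(t)\to\infty$ or \eqref{stabC} holds; and uniformity when additionally $\sup_{t_0\ge\tau}h(t_0,t_0)<\infty$.

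The main obstacle I anticipate is the bookkeeping in the second step: setting up the once-iterated variation-of-constants estimate so that the emerging double integrals match \eqref{gains-ii}--\eqref{gains-ij} verbatim, and verifying that the resulting $2\times2$ linear system for $\|g_i\|_{\omega_i,t}$ is solvable with nonnegative solution under \eqref{ISGC} — this is where the small-gain condition does its real work, and where a sign error in exploiting positivity of the cone would be fatal. A secondary technical point is justifying the almost-everywhere differentiation of the infinite-horizon functionals $V_i$ and the interchange of integration orders (Tonelli, using the assumed convergence of the integrals in \eqref{weights_q_i} and the well-definedness of all $h_{ij}$), which in the Hilbert-space Bochner setting needs a little care but no new ideas.
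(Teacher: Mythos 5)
There is a genuine gap at the core of your argument. Your Lyapunov candidate is a sum of the \emph{decoupled} functionals $V_i(t,x_i)=\int_t^\infty q_i(s)\|\Omega_i(s,t)x_i\|^2\,ds$, and you then propose to treat the coupling terms $A_{ij}(t)x_j$ as perturbations in $\dot V_i$, absorbed after ``iterating variation-of-constants once''. Along the coupled system this $V=V_1+V_2$ produces the exact decay $-q_1\|x_1\|^2-q_2\|x_2\|^2$ \emph{plus} cross terms of the form $2\langle P^{(0)}_{11}(t)x_1,A_{12}(t)x_2\rangle$, which for $x\in K$ are nonnegative (the cone positivity works against you here, not for you). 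Dominating them by $q_1\|x_1\|^2+q_2\|x_2\|^2$ forces a pointwise-in-$t$ smallness condition on $\|A_{ij}(t)\|$, which is precisely what the integral-gain condition \eqref{ISGC} is designed to avoid (it fails, e.g., for the spiky unbounded coupling of Example 2, where the theorem nevertheless applies). The paper's proof avoids this by making the coupling part of the Lyapunov operators themselves: it takes the full matrix-valued form \eqref{SLF} with an off-diagonal operator $P_{12}$, imposes the coupled differential Lyapunov equations \eqref{DUMFL} so that the derivative is \emph{exactly} \eqref{DLF} with no cross remainder, rewrites them as the operator integral equations \eqref{7bis}, eliminates $P_{12}$, and proves existence of $(P_{11},P_{22})$ by a generalized (vector/pseudometric) Banach fixed-point theorem whose contraction matrix is exactly $\Pi(t_0)$; this is where $r_\sigma(\Pi(t_0))<1$ does its work and where the explicit bounds $\|P_{ii}(t)\|\le h_{ii}(t,t_0)$, $\|P_{12}(t)\|\le h_{12}(t,t_0)$ come from. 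Relatedly, your ``system of scalar integral inequalities for the quantities $\|g_i\|_{\omega_i,t}$'' confuses knowns and unknowns: the $g_i$ are data from \eqref{weights_q_i}; the unknowns are the weighted norms $\|P_{ii}\|_{\omega_i,T}$ of the fixed point, and the gains \eqref{gains-ii}--\eqref{gains-ij} arise as the Lipschitz constants of the fixed-point map, not from a once-iterated Duhamel estimate on trajectories.

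A second, smaller point: the cone enters the paper's proof in two specific places that your sketch does not reproduce — (i) positivity of $P_{11},P_{22},P_{12}$ is obtained by monotone iteration of \eqref{A1-A2} using \eqref{wazewski-conditions}, and it is needed to discard the cross term $\langle x_1,P_{12}(t)x_2\rangle\ge0$ in the \emph{lower} (coercivity) bound $v\ge\phi(t)\|x\|^2$ on $K$; and (ii) the nonflat decomposition $x_0=x_0^+-x_0^-$ with constant $a_K$ extends the estimate from $K$ to all of $\mathcal H$. Your Steps on the comparison inequality $\dot v\le-\frac{q}{h}v$, the integration to \eqref{estimate-for-solutions}, the $a_K$ splitting, and the readout of the stability statements do match the paper, but they only become available once the coupled operators $P_{ij}$ with the exact identity \eqref{DLF} and the bounds $h_{ij}$ have been constructed; as written, your plan does not supply them.
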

\begin{remark}
Condition \eqref{ISGC} is the small-gain condition for integral gains.
    Note that $r_\sigma(\Pi(t_0))<1\;\Leftrightarrow\; -1+\tr\Pi(t_0)<\det\Pi(t_0)<1$.
\end{remark}
\begin{proof}[Sketch]
In the first step we will prove estimates assuring desired stability properties with respect to $K$. In the last step this will be extended to the whole state space.

{\it Step 1. Matrix-valued Lyapunov function.} \\
We look for a Lyapunov function for system \eqref{1} in the form
\begin{equation}\label{SLF}
v(t,x_1,x_2)=v_{11}(t,x_1)+2v_{12}(t,x_1,x_2)+v_{22}(t,x_2),
\end{equation}
with $v_{ii}(t,x_i)=\langle x_i,P_{ii}(t)x_i\rangle_{\mathcal H_i}$,
$v_{12}(t,x_1,x_2)=v_{21}(t,x_1,x_2)=\langle x_1,P_{12}(t)x_2\rangle_{\mathcal H_1}$, and $P_{ij}(t):\mathcal H_j\to\mathcal H_i$ to be defined later. 
Its derivative along solutions of \eqref{1} is 
\begin{equation*}
\gathered
\dot v(t,x_1,x_2)= \langle x_1,(\dot P_{11}(t)+A_{11}^{*}(t)P_{11}(t)+P_{11}(t)A_{11}(t)\\
+(P_{12}(t)A_{21}(t)+A_{21}^{*}(t)P_{12}^{T}(t)))x_1\rangle_{\mathcal H_1}
\\
+2\langle x_1,((\dot P_{12}(t)+A_{11}^{*}(t)P_{12}(t)+P_{12}(t)A_{22}(t))\\
+P_{11}(t)A_{12}(t)+A_{21}^{*}(t)P_{22}(t))x_2\rangle_{\mathcal H_1}\\
+\langle x_2,((\dot P_{22}(t)+A_{22}^{*}(t)P_{22}(t)+P_{22}(t)A_{22}(t))\\
+(A_{12}^{*}(t)P_{12}(t)+P_{12}^{*}(t)A_{12}(t)))x_2\rangle_{\mathcal H_2}
\endgathered
\end{equation*}
which is negative definite, if $P_{ij}(t)$, $i,j=1,2$ satisfy the next differential equations (we omit the argument $t$ for brevity)
\begin{equation}\label{DUMFL}
\gathered
\dot P_{11}+A_{11}^{*}P_{11}+P_{11}A_{11}+(P_{12}A_{21}+A_{21}^{*}P_{12}^{*})=-\id q_1,
\\
\dot P_{12}+A_{11}^{*}P_{12}+P_{12}A_{22}+P_{11}A_{12}+A_{21}^{*}P_{22}\quad=\quad 0,\\
\dot P_{22}+A_{22}^{*}P_{22}+P_{22}A_{22}+(A_{12}^{*}P_{12}+P_{12}^{*}A_{12})=-\id q_2,
\endgathered
\end{equation}
where the functions $q_i$ are from \eqref{weights_q_i}, so that
\begin{equation}\label{DLF}
\gathered
\dot v(t,x_1,x_2)=-q_1(t)\|x_1\|^2-q_2(t)\|x_2\|^2\le-q(t)\|x\|^2.
\endgathered
\end{equation}
Equations \eqref{DUMFL} can be converted to integral equations
{\small
\begin{equation}\label{7bis}
\gathered
P_{11}(t)=\int\limits_{t}^{\infty}\Omega_1^{*}(s,t)
(\id q_1(s)+P_{12}(s)A_{21}(s)+A_{21}^{*}(s)P_{12}^{*}(s))\Omega_1(s,t)\,ds,\\
P_{22}(t)=\int\limits_{t}^{\infty}\Omega_2^{*}(s,t)
(\id q_2(s)+A_{12}^{*}(s)P_{12}(s)+P_{12}^{*}(s)A_{12}(s))\Omega_2(s,t)\,ds,\\
P_{12}(t)=\int\limits_{t}^{\infty}\Omega_1^{*}(s,t)
(P_{11}(s)A_{12}(s)+A_{21}^{*}(s)P_{22}(s))\Omega_2(s,t)\,ds.
\endgathered
\end{equation}
}
By direct calculations it is seen that any solution to \eqref{7bis}
satisfies \eqref{DUMFL}. Note that the last equation can be excluded by its substitution to the other two. Now need to derive conditions guaranteeing the existence of a solution to the remaining two equations.

{\it Step 2. System of integral equations for diagonal elements.}\\
The remaining two equations for $P_{11}$, $P_{22}$ can be written as
\begin{equation}\label{A1-A2}
P_{ii}(t)=Q_i(t)+\mathfrak{F}_i(P_{11},P_{22})(t),\quad i=1,2,\quad t\in[t_0,\infty)
\end{equation}
with some suitable $Q_i(t)\ge0$ and $\mathfrak{F}_i(P_{11},P_{22})(t)\ge0$, which can be explicetly written, but are omitted for brevity.

The existence of a solution to the system \eqref{A1-A2}
can be proved by the generalized Banach theorem \cite[Sections 11.3 and 12.1]{Col66} for pseudometric spaces, see \cite[Sections 3.3]{Col66}. 
The application of this fixed point theorem is possible due to the condition \eqref{7bis}.

Note that matrix $\Pi(t_0)$ is positive, since all its components are non-negative
\begin{equation*}
\gathered
\Pi(t_0):=
\begin{pmatrix}
\pi_{11}(t_0)&\pi_{12}(t_0)\\
\pi_{12}(t_0)&\pi_{22}(t_0)
\end{pmatrix}.
\endgathered
\end{equation*}
The  fixed-point theorem from \cite[Sections 11.3 and 12.1]{Col66} implies due to $r_{\sigma}(\Pi(t_0))<1$ the existence and uniqueness of a solution $(P_{11},P_{22})$ to \eqref{A1-A2}, which satisfies (note that $(I-\Pi(t_0))^{-1}$ is also positive)
\begin{equation*}
\gathered
\begin{pmatrix}
\|P_{11}\|_{\omega_1,T}\\
\|P_{22}\|_{\omega_2,T}
\end{pmatrix}\le
(I-\Pi(t_0))^{-1}
\begin{pmatrix}
\|Q_1\|_{\omega_1,T}\\
\|Q_2\|_{\omega_2,T}
\end{pmatrix}\\
\qquad\qquad\qquad\stackrel{\eqref{weights_q_i}}{\le}
(I-\Pi(t_0))^{-1}
\begin{pmatrix}
\|g_1\|_{\omega_1,T}\\
\|g_2\|_{\omega_2,T}
\end{pmatrix}
\endgathered
\end{equation*}
\begin{equation*}
\gathered
=\frac{1}{1-\tr\Pi(t_0)+\det\Pi(t_0)}\\
\times
\begin{pmatrix}
1-\pi_{22}(t_0)&
\pi_{12}(t_0)
\\
\pi_{21}(t_0)&
1-\pi_{11}(t_0)
\end{pmatrix}
\begin{pmatrix}
\|g_1\|_{\omega_1,T}\\
\|g_2\|_{\omega_2,T}
\end{pmatrix},
\endgathered
\end{equation*}
or componentwise
\begin{equation*}
\|P_{11}\|_{\omega_1,T}\le\frac{(1-\pi_{11}(t_0))\|g_1\|_{\omega_1,T}+
\pi_{12}(t_0)\|g_2\|_{\omega_2,T}}{1-\tr\Pi(t_0)+\det\Pi(t_0)},
\end{equation*}
\begin{equation*}
\|P_{22}\|_{\omega_2,T}\le\frac{\pi_{21}(t_0)\|g_1\|_{\omega_1,T}+(1-\pi_{22}(t_0))\|g_2\|_{\omega_1,T}}{1-\tr\Pi(t_0)+\det\Pi(t_0)},
\end{equation*}
Hence for any $t\ge t_0$ the next inequalities hold
\begin{equation}\label{h11}
\|P_{11}(t)\|\le\frac{(1-\pi_{11}(t_0))\frac{\|g_1\|_{\omega_1,t}}{\omega_1(t)}+
\pi_{12}(t_0)\frac{\|g_2\|_{\omega_2,t}}{\omega_1(t)}}{1-\tr\Pi(t_0)+\det\Pi(t_0)}
=:h_{11}(t,t_0),
\end{equation}
\begin{equation}\label{h22}
\|P_{22}(t)\|\le\frac{\pi_{21}(t_0)\frac{\|g_1\|_{\omega_1,t}}{\omega_2(t)}+(1-\pi_{22}(t_0))\frac{\|g_2\|_{\omega_2,t}}{\omega_2(t)}}{1-\tr\Pi(t_0)+\det\Pi(t_0)}
=:h_{22}(t,t_0).
\end{equation}
Now from the last equation in \eqref{7bis} we obtain
\begin{equation}\label{h12}
\gathered
\|P_{12}(t)\|\le\int\limits_{t}^{\infty}\|\Omega_1^{*}(s,t)\|
(\|P_{11}(s)\|\|A_{12}(s)\|\\
+\|A_{21}^{*}(s)\|\|P_{22}(s)\|)\|\Omega_2(s,t)\|\,ds\\
\le\beta_1(t)\beta_2(t)\int\limits_{t}^{\infty}\alpha_1(s)\alpha_2(s)(\|A_{12}(s)\|h_{11}(s,t_0)
\\+\|A_{21}(s)\|h_{22}(s,t_0))\,ds=:h_{12}(t,t_0).
\endgathered
\end{equation}

Recall that solutions to integral equations \eqref{A1-A2} can be approximated iteratively. If we choose 
$$P_{ii}^{(0)}(t)\ge 0 \quad\text{ for }\quad t\ge t_0$$ as the initial iteration, then due to  $$\Omega_i(t,s)\ge 0,\qquad A_{ij}(t)\ge 0,\qquad i\ne j$$ it follows for any subsequent iteration that $P_{ii}^{(k)}(t)\ge 0$ for $k\in\mathbb Z_+$, $t\ge t_0$.
This implies that solutions to \eqref{A1-A2} satisfy
$P_{ii}(t)\ge 0$ for $t\ge t_0$, $i=1,2$. Hence, also $P_{12}(t)\ge 0$ holds for  $t\ge t_0$.

Moreover, from \eqref{A1-A2} the next estimate follows
\begin{equation}\label{lower-bound-for-Pii}
P_{ii}(t)\ge \int\limits_{t}^{\infty}q_i(s)\Omega_i^{*}(s,t)\Omega_i(s,t)\,ds,\quad t\ge t_0,\quad i=1,2.
\end{equation}
These observations are useful for the derivation of the upper and lower estimates for  $v(t,x_1,x_2)$.

{\it Step 3. Estimation of $v(t,x_1,x_2)$.} 

From \eqref{SLF} and \eqref{h11}-\eqref{h12} it follows that
\begin{equation}\label{Ineq1}
\gathered
v(t,x_1,x_2)\\
\le \|P_{11}(t)\|\|x_1\|^2+2\|P_{12}(t)\|\|x_1\|\|x_2\|+\|P_{22}(t)\|\|x_2\|^2\\
\le  h_{11}(t,t_0)\|x_1\|^2+
2h_{12}(t,t_0)\|x_1\|\|x_2\|
+h_{22}(t,t_0)\|x_2\|^2\\
\le (h_{11}(t,t_0)+h_{12}(t,t_0))\|x_1\|^2+
(h_{22}(t,t_0)+h_{12}(t,t_0))\|x_2\|^2\\
\le h(t,t_0)(\|x_1\|^2+\|x_2\|^2).
\endgathered
\end{equation}
To estimate the Lyapunov function $v(t,x_1,x_2)$ from below, we note that for $x_i\in K_i$ the following estimates hold due to \eqref{lower-bound-for-Pii}
\begin{equation*}
\gathered
\langle x_i,P_{ii}(t)x_i\rangle_{\mathcal H_i}\ge \int\limits_{t}^{\infty}q_i(s)\|\Omega_i(s,t)x_i\|^2\,ds\\
\ge
\delta_i^2(t)\int\limits_{t}^{\infty}q_i(s)\gamma_i^2(s)\,ds\|x_i\|^2\ge\phi_i(t)\|x_i\|^2,\\
\langle x_1,P_{12}(t)x_2\rangle_{\mathcal H_1}\ge 0
\endgathered
\end{equation*}
for all $t\ge t_0$, from which we obtain a coercivity type estimate
\begin{equation}\label{Ineq2}
\gathered
v(t,x_1,x_2)\ge \phi_1(t)\|x_1\|^2+\phi_2(t)\|x_2\|^2\\
\qquad\quad\ge\phi(t)(\|x_1\|^2+\|x_2\|^2).\\
\endgathered
\end{equation}

{\it Step 4. Estimation of solutions in $K$} \\
The estimations \eqref{Ineq1} and \eqref{Ineq2} of the Lyapunov function and \eqref{DLF} for its derivative allow to estimate solutions, from wich suffitient stability conditions can be derived easily.
Namely, from \eqref{DLF} and \eqref{Ineq1} we obtain
\begin{equation*}
\gathered
\dot v(t,x_1,x_2)=-q(t)(\|x_1\|^2+\|x_2\|^2)\le-\frac{q(t)}{h(t,t_0)}v(t,x_1,x_2).
\endgathered
\end{equation*}
Which implies the estimate
\begin{equation}\label{Ineq3}
\gathered
v(t,x_1(t),x_2(t))\le v(t_0,x_1(t_0),x_2(t_0))\exp\Big(-\int\limits_{t_0}^t\frac{q(s)}{h(s,t_0)}\,ds\Big).
\endgathered
\end{equation}
Now from \eqref{Ineq1} and \eqref{Ineq2} we derive the desired estimate for solutions with initial state $x_0\in K$
\begin{equation*}\label{Ineq3}
\|x(t)\|\le \sqrt{\frac{h(t_0,t_0)}{\phi(t)}}\exp\Big(-\int\limits_{t_0}^t\frac{q(s)}{2h(s,t_0)}\,ds\Big)\|x_0\|.
\end{equation*}

{\it Step 5. Estimation of all solutions.}

For arbitrary $x_0\in\mathcal H$ there exist $x^{\pm}_0\in K$ such that
$x_0=x_0^+-x_0^-$ and  $\|x^{\pm}_0\|\le a_K\|x_0\|$, $a_K>0$. By the linearity of the system using \eqref{Ineq3} we obtain the desired estimate
\begin{equation*}
\gathered
\|x(t;t_0,x_0)\|\le \|x(t;t_0,x_0^+)-x(t;t_0,x_0^-)\|\\
\le
\|x(t;t_0,x_0^+)\|+\|x(t;t_0,x_0^-)\|\\
\le 2a_K\sqrt{\frac{h(t_0,t_0)}{\phi(t)}}\exp\Big(-\int\limits_{t_0}^t\frac{q(s)}{2h(s,t_0)}\,ds\Big)\|x_0\|
\endgathered
\end{equation*}
{\it Step 6. Stability properties.}
Basic analysis observations imply the desired stability properties under the corresponding additional conditions given at the end of the theorem. We omit the details for brevity.
\end{proof}
As a consequence we can derive much easier conditions by taking the limit for $t_0\to\infty$ in $\Pi(t_0)$. To this end we estimate the functions $h,h_{ii},h_{12}$ from Theorem \ref{th:main-result} as follows.
\begin{corollary}\label{sledstvie}
Let there exist $\Pi:=\lim\limits_{t_0\to\infty}\Pi(t_0)$ such that $r_\sigma(\Pi)<1$.
If $\inf_{t\ge t_0}\phi(t)>0$ and 
$$\int\limits_{t}^{\infty}\alpha_1(s)\alpha_2(s)w(s)(\|A_{12}(s)\|
+\|A_{21}(s)\|)\,ds<\infty,$$ then \eqref{1} is stable. If, in addition, either 
$\lim\limits_{t\to\infty}\phi(t)=\infty$ or for some $t_0^*$ the next integral is unbounded
\begin{equation}\label{divergence}
\gathered
\int\limits_{t_0^*}^\infty\frac{q(t)dt}{w(t)(1+\beta_1(t)\beta_2(t)\int\limits_{t}^{\infty}\alpha_1(s)\alpha_2(s)(\|A_{12}(s)\|
+\|A_{21}(s)\|)ds)}
\endgathered
\end{equation}
where we have denoted
$$w(t):=\frac{\|g_1\|_{\omega_1,t}}{\omega_1(t)}+
\frac{\|g_2\|_{\omega_2,t}}{\omega_2(t)}+\frac{\|g_1\|_{\omega_1,t}}{\omega_2(t)}+
\frac{\|g_2\|_{\omega_2,t}}{\omega_1(t)},$$
then  \eqref{1} is asymptotically stable.
\end{corollary}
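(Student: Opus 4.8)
The plan is to deduce the corollary from Theorem~\ref{th:main-result} by first producing a $t_0$ for which the integral small-gain condition \eqref{ISGC} holds, and then replacing the implicitly defined quantities $h_{ii}(t,t_0)$, $h_{12}(t,t_0)$, $h(t,t_0)$ of that theorem by an explicit majorant built from $w$. Since $\Pi(t_0)\to\Pi$ entrywise, the scalars $\tr\Pi(t_0)$ and $\det\Pi(t_0)$ converge to $\tr\Pi$ and $\det\Pi$; by the Remark following the theorem, $r_\sigma(\Pi)<1$ is equivalent to $-1+\tr\Pi<\det\Pi<1$, so the same strict inequalities, hence $r_\sigma(\Pi(t_0))<1$, hold for all $t_0$ beyond some $t^*_0$, and Theorem~\ref{th:main-result} applies for each such $t_0$. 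Moreover $1-\tr\Pi(t_0)+\det\Pi(t_0)$ stays bounded below by a positive constant while $1-\pi_{ii}(t_0)$ and $\pi_{ij}(t_0)$ stay bounded above, so there is a constant $C$, uniform in $t_0\ge t^*_0$, dominating all the prefactors appearing in \eqref{h11}, \eqref{h22}.

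Comparing \eqref{h11}, \eqref{h22} with the definition of $w$, the four summands making up $h_{11}(t,t_0)+h_{22}(t,t_0)$ are exactly the four terms of $w(t)$ up to those bounded prefactors, whence $h_{11}(t,t_0)+h_{22}(t,t_0)\le C\,w(t)$. Inserting this into \eqref{h12} gives
\[
h_{12}(t,t_0)\le C\,\beta_1(t)\beta_2(t)\int_t^\infty\alpha_1(s)\alpha_2(s)\,w(s)\,(\|A_{12}(s)\|+\|A_{21}(s)\|)\,ds,
\]
which is finite by the first hypothesis of the corollary; consequently $h(t,t_0)\le h_{11}(t,t_0)+h_{22}(t,t_0)+h_{12}(t,t_0)<\infty$ for all $t\ge t_0$, in particular $h(t_0,t_0)<\infty$. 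Together with the standing assumption $\inf_{t\ge t_0}\phi(t)>0$, Theorem~\ref{th:main-result} then already yields stability of \eqref{1}.

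For asymptotic stability, if $\phi(t)\to\infty$ this is precisely the first alternative in Theorem~\ref{th:main-result}. Otherwise I would sharpen the bound on $h$ to
\[
h(s,t_0)\le C\,w(s)\Big(1+\beta_1(s)\beta_2(s)\int_s^\infty\alpha_1(u)\alpha_2(u)(\|A_{12}(u)\|+\|A_{21}(u)\|)\,du\Big),
\]
pulling the slowly varying factor $w$ out of the inner integral with the help of the finiteness hypothesis, so that $q(s)/h(s,t_0)$ dominates, up to the constant $C$, the integrand of \eqref{divergence}; divergence of \eqref{divergence} then forces \eqref{stabC}, and Theorem~\ref{th:main-result} delivers asymptotic stability of \eqref{1}.

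The main obstacle I expect is precisely this last estimate: forcing the dependence on $w$ and on the tail integral into the exact shape of the denominator of \eqref{divergence} (which may need mild monotonicity/comparison properties of the weights $\omega_i$), while keeping $C$ uniform as $t_0\to\infty$. Once $h(s,t_0)$ is controlled by $C\,w(s)(1+\cdots)$ the remaining implications are immediate from the corresponding parts of Theorem~\ref{th:main-result}; note that uniform asymptotic stability would in addition require $\sup_{t_0\ge\tau}h(t_0,t_0)<\infty$, which the corollary does not claim.
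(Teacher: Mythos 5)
Your route is essentially the paper's: use the convergence $\Pi(t_0)\to\Pi$ with $r_\sigma(\Pi)<1$ to get $r_\sigma(\Pi(t_0))<1$ for all $t_0\ge t_0^*$ (the paper can even use monotonicity of the entries in $t_0$), apply Theorem \ref{th:main-result}, bound $h_{ii}(t,t_0)\le C\,w(t)$ and $h_{12}(t,t_0)\le C\beta_1(t)\beta_2(t)\int_t^\infty\alpha_1(s)\alpha_2(s)w(s)(\|A_{12}(s)\|+\|A_{21}(s)\|)\,ds$ with $C$ uniform in $t_0\ge t_0^*$, and read off the conclusions from the theorem's alternatives. The ``main obstacle'' you flag — converting that bound into the exact denominator of \eqref{divergence}, i.e.\ pulling $w$ out of the tail integral — is treated no more carefully in the paper: it simply states ``hence \eqref{divergence} implies \eqref{stabC}'', so you are not missing anything the authors supply.

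There is, however, one genuine gap in your argument: it establishes (asymptotic) stability only for initial times $t_0\ge t_0^*$. The corollary asserts that \eqref{1} is stable, and by the definitions in Section \ref{preliminaries} this quantifies over every $t_0\ge\tau$; since the hypotheses only give the small-gain condition \eqref{ISGC} in the limit $t_0\to\infty$, Theorem \ref{th:main-result} may be inapplicable for $t_0\in[\tau,t_0^*)$ (this is precisely the situation the corollary is designed for, cf.\ Example 1, where $\Pi(t_0)$ need not have spectral radius below one for small $t_0$). The paper closes this by the factorization $\Omega(t,t_0)=\Omega(t,t_0^*)\Omega(t_0^*,t_0)$: for a fixed $t_0<t_0^*$ the operator $\Omega(t_0^*,t_0)$ is bounded, so $\|x(t;t_0,x_0)\|\le\|\Omega(t,t_0^*)\|\,\|\Omega(t_0^*,t_0)\|\,\|x_0\|\le\tilde C\,\|x_0\|$, and the bound $\|\Omega(t,t_0^*)\|\le C$ coming from \eqref{estimate-for-solutions} with $\inf_{t}\phi(t)>0$ (respectively the decay furnished by $\phi(t)\to\infty$ or by \eqref{stabC}) transfers from the initial time $t_0^*$ to $t_0$, with a constant depending on $t_0$ — which is also why only non-uniform asymptotic stability is claimed. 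Add this step; without it the stated conclusion ``\eqref{1} is (asymptotically) stable'' is not proved for initial times before $t_0^*$.
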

\begin{proof}
Let $t_0^*$ be such that $r_\sigma(\Pi(t_0^*))<1$, then the estimate \eqref{estimate-for-solutions} holds for any $t_0\ge t_0^*$ and due to $\inf_{t\ge t_0}\phi(t)>0$ we
arrive from \eqref{estimate-for-solutions} to $\|x(t;t_0^*,x_0)\|\le C \|x_0\|$ for some suitable constant $C>0$. 

For any other $t_0\in[\tau,t_0^*)$ the same estimate (with a different constant $C$) holds due to $\Omega(t_0,t)=\Omega(t_0,t_0^*)\Omega(t_0^*,t)$ and 
$$\|x(t;t_0,x_0)\|=\|\Omega(t,t_0)x_0\|=\|\Omega(t,t_0^*)\Omega(t_0^*,t_0)x_0\|$$
$$\le \|\Omega(t,t_0^*)\|\cdot\|\Omega(t_0^*,t_0)\|\cdot \|x_0\|\le\tilde{C}\|x_0\| $$
for some $\tilde{C}>0$, which implies the stability.
\\
If $\lim\limits_{t\to\infty}\phi(t)=\infty$, then similarly we obtain for any $t_0\in[\tau,\infty)$ that 
$$\|x(t;t_0,x_0)\|\le \frac{C}{\sqrt{\phi(t)}}\|x_0\| $$
implying the asymptotic stability. The same arguments can be implied in case of the very last condition.
\\
Note that the choice of $\tilde C$ in depends on $t_0$, hence in general the proved asymptotic stability is not necessarily uniform.

It remains to consider the condition \eqref{divergence}. Let us first note, that for some suitable constant $C>0$ we have for all $t\ge0$  $|h_{ii}(t,t_0)|\le C w(t)$ and $h_{12}(t,t_0)$
$$\le C\beta_1(t)\beta_2(t)\int\limits_{t}^{\infty}\alpha_1(s)\alpha_2(s)w(s)(\|A_{12}(s)\|
+\|A_{21}(s)\|)ds$$ Hence \eqref{divergence} implies \eqref{stabC}, guaranteeing the (non-uniform) asymptotic stability.
\end{proof}
Next we provide two examples demonstrating the application of our results and showing advantages over the existing approaches.
\section{Example 1 (from \cite{KaT04})}\label{ex1}
Consider the system of equations for scalar valued functions $x_1,x_2$ with parameters $a_1,a_2\in\mathbb R$, $K>0$, and any initial time $t_0\ge0$, taken from \cite{KaT04}:
\begin{equation}\label{Ex-of-Karafylis}
    \left\{
    \begin{array}{l}
       \dot x_1=-x_1 + a_1 e^{-t}x_2, \\
       \dot x_2=a_2(1+t)x_1-Kx_2.
    \end{array}
    \right.
\end{equation}
It was proved in \cite{KaT04}, this system is non-uniformly asymptotically stable if
\begin{equation}\label{condition-of-Karafyllis}
|a_1a_2|<\sqrt{e^3}K/8.
\end{equation}
We will apply our Corollary \ref{sledstvie} to demonstrate that condition \eqref{condition-of-Karafyllis} can be removed completely.
A comparison system of Wazewski type can be derived for \eqref{Ex-of-Karafylis} using Lyapunov functions $V_i(x_i)=|x_i|,\;i=1,2$, which leads to
\begin{equation}\label{comparison-systems-for-Karafyllis-ex}
    \left\{
    \begin{array}{l}
       \dot x_1=-x_1 + |a_1|e^{-t}x_2, \\
       \dot x_2=|a_2|(1+t)x_1-Kx_2.
    \end{array}
    \right.
\end{equation}
For which we have 
$$\alpha_1(t)=e^{-t},\quad \beta_1(s)=e^{s},\quad \gamma_1(t)=e^{t},\quad \delta_1(s)=e^{-s}$$
$$\alpha_2(t)=e^{-Kt},\quad \beta_2(s)=e^{Ks},\quad \gamma_2(t)=e^{Kt},\quad \delta_2(s)=e^{-Ks}$$
Due to the unbounded interconnection coefficient in the second equation we use the following weighting functions in the definition of integral gains
$$\omega_1(t)=1,\qquad\omega_2(t)=(1+t)^2$$
and after basic calculations we derive from \eqref{gains-ii} and \eqref{gains-ij} that the matrix $\Pi(t_0)$ composed of integral gains can be chosen as
$$\Pi(t_0)=\begin{pmatrix}
  \frac{2|a_1a_2|}{9(K+2)}e^{-t_0}(3t_0+4) & \frac{(1+t_0)^2a_1^2}{(K+2)(K+1)}e^{-2t_0} \\
   \frac{a_2^2}{2(K+1)} & \frac{2|a_1a_2|}{(K+1)(2K+1)}e^{-t_0}
\end{pmatrix}$$
for which we have 
$$r_{\sigma}\Big(\lim\limits_{t_0\to\infty}\Pi(t_0)\Big)=r_{\sigma}
\begin{pmatrix}
0&0\\
\frac{a_2^2}{2(K+1)}&0
\end{pmatrix}=0<1.$$
We chose the weights $q_i(t)=1+t$ and calculate from \eqref{weights_q_i} that 
$$\phi_1(t)=\frac12t+\frac34,\quad \phi_2(t)=\frac{t}{2K}+\frac{2K+1}{4K^2}.$$
Hence $\phi(t):=\min\{\phi_1(t),\phi_2(t)\}$ satisfies $\inf_{t\ge t_0}\phi(t)>0$ and
$\lim\limits_{t\to\infty}\phi(t)=\infty$. Now by Corollary \ref{sledstvie} we conclude that the trivial solution of \eqref{Ex-of-Karafylis} is non-uniformly asymptotically stable.

Note that by our result we obtain the same property as in \cite{KaT04} without \eqref{condition-of-Karafyllis}, which demonstrates an advantage of our results. We also note that the small-gain condition (5.16a) was possible to apply in spite of the unbounded coupling $a_2(1+t)$ due to the exponential decay of the other coupling function $e^{-t}$. In the next example we consider an interconnection with both unbounded coupling functions.
\section{Example 2}\label{ex2}
Consider two scalar interconnected differential equations
\begin{equation}\label{Example2}
\gathered
\dot x_1=-\nu_1 x_1+a_1\psi(t)x_2,\\
\dot x_2=-\nu_2 x_2+a_2\psi(t)x_1
\endgathered
\end{equation}
with $\nu_i>0$, $a_i>0$ and unbounded coupling function
\begin{equation*}
\gathered
\psi(t)=
\begin{cases}
n,\quad t\in[n,n+1/n),\quad n\ge 2,\\
0,\quad t\notin \bigcup\limits_{n=2}^{\infty}[n,n+1/n).
\end{cases}
\endgathered
\end{equation*}
\begin{tikzpicture}
\draw[gray, thick,->] (-1,0) -- (8,0)node[anchor=west]{$t$};
\draw[gray, thick,->] (0,-1) -- (0,5)node[anchor=west]{$\psi$};

\draw[gray, thick,-] (-0.1,1) -- (0.1,1)node[anchor=west]{$1$};
\draw[gray, thick,-] (-0.1,2) -- (0.1,2)node[anchor=west]{$2$};
\draw[gray, thick,-] (-0.1,3) -- (0.1,3)node[anchor=west]{$3$};
\draw[gray, thick,-] (-0.1,4) -- (0.1,4)node[anchor=west]{$4$};

\draw[gray, thick,-] (1,-0.1) -- (1,0.1)node[anchor=south]{$1$};
\draw[gray, thick,-] (2,-0.1) -- (2,0.1)node[anchor=south]{$2$};
\draw[gray, thick,-] (3,-0.1) -- (3,0.1)node[anchor=south]{$3$};
\draw[gray, thick,-] (4,-0.1) -- (4,0.1)node[anchor=south]{$4$};
\draw[gray, thick,-] (5,-0.1) -- (5,0.1)node[anchor=south]{$5$};
\draw[gray, thick,-] (6,-0.1) -- (6,0.1)node[anchor=south]{$6$};

\draw[thick,-] (0,0) -- (2,0);
\draw[thick,-] (1,1) -- (2,1);
\draw[thick,-] (2,2) -- (2.5,2);
\draw[thick,-] (3,3) -- (3.33,3);
\draw[thick,-] (4,4) -- (4.25,4);
\draw[thick,-] (5,5) -- (5.2,5);
\draw[thick,-] (6,6) -- (6.15,6);
\end{tikzpicture}

Since the interconnecting gains are both unbounded and can be of the same sign the results of \cite{LiS10} cannot be applied in order to conclude anything about stability properties of this interconnection. Also the small gain condition (5.16a) from \cite{KaT04} cannot be satisfied. However our approach provides reasonable stability conditions for this interconnection.

The integral gains can be calculated similarly to the previous example and look as follows
\begin{equation*}
\gathered
\pi_{11}(t)=\frac{2a_1a_2e^{(2\nu_1+|\nu_2-\nu_1|)/[t]}}{(1-e^{-(\nu_1+\nu_2)})(1-e^{-2\nu_1})}.
\endgathered
\end{equation*}
\begin{equation*}
\gathered
\pi_{12}(t)=\frac{2a_2^2e^{(2\nu_1+|\nu_2-\nu_1|)/[t]}}{(1-e^{-(\nu_1+\nu_2)})(1-e^{-2\nu_1})},\\
\pi_{21}(t)=\frac{2a_1^2e^{(2\nu_2+|\nu_2-\nu_1|)/[t]}}{(1-e^{-(\nu_1+\nu_2)})(1-e^{-2\nu_2})},\\
\pi_{22}(t)=\frac{2a_1a_2e^{(2\nu_2+|\nu_2-\nu_1|)/[t]}}{(1-e^{-(\nu_1+\nu_2)})(1-e^{-2\nu_2})}
\endgathered
\end{equation*}
For $t\in[p+1/p,p+1)$, $p\ge 2$, it holds that $\pi_{ij}(t)=\pi_{ij}(p+1)$, and for
$t\in[0,2)$ we have $\pi_{ij}(t)=\pi_{ij}(2)$.

This implies that
\begin{equation*}
\gathered
\Pi_1^*(t_0)\\=\frac{1}{1-e^{-(\nu_1+\nu_2)}}
\begin{pmatrix}
\frac{2a_1a_2e^{(2\nu_1+|\nu_2-\nu_1|)/[t_0]}}{1-e^{-2\nu_1}}&\frac{2a_2^2e^{(2\nu_1+|\nu_2-\nu_1|)/[t_0]}}{1-e^{-2\nu_1}}\\
\frac{2a_1^2e^{(2\nu_2+|\nu_2-\nu_1|)/[t_0]}}{1-e^{-2\nu_2}}&\frac{2a_1a_2e^{(2\nu_2+|\nu_2-\nu_1|)/[t_0]}}{1-e^{-2\nu_2}}
\end{pmatrix}
\endgathered
\end{equation*}
having the limit
\begin{equation*}
\gathered
\overline{\Pi}_1=\lim\limits_{t_0\to+\infty}\Pi_1^*(t_0)=\frac{1}{1-e^{-(\nu_1+\nu_2)}}
\begin{pmatrix}
\frac{2a_1a_2}{1-e^{-2\nu_1}}&\frac{2a_2^2}{1-e^{-2\nu_1}}\\
\frac{2a_1^2}{1-e^{-2\nu_2}}&\frac{2a_1a_2}{1-e^{-2\nu_2}}
\end{pmatrix}
\endgathered
\end{equation*}
so that stability condition $\tr\overline{\Pi}_1<1$ takes the form
\begin{equation*}
\gathered
a_1a_2<\frac{(1-e^{-(\nu_1+\nu_2)})(1-e^{-2\nu_1})(1-e^{-2\nu_2})}{2(2-e^{-2\nu_2}-e^{-2\nu_1})}.
\endgathered
\end{equation*}

\section{Conclusions}\label{conclusions}
This work introduces the notion of integral gains for non-autonoumous linear systems. A new small-gain type theorem is proved on the base of this notion. Two examples demonstrate clear advantages of the proposed results. In particular our results can cope with the situations, where known results cannot be applied. A new construction of a coercive Lyapunov function is proposed. The examples are finite dimensional, however our results are valid for infinite dimensional systems. 

In the future we plan to extend these results to the case of unbounded operators to allow the application to interconnected PDEs and to the case of unbounded coupling operators. This will require to deal with non-coercive Lyapunov functions.
Furthermore, we will extend these results to an arbitrary number of inteconnected systems and infinite networks. Also we believe that the idea of Integral gains can be extended to nonlinear systems, which a matter of our future research.

\addtolength{\textheight}{10cm}   









\bibliographystyle{IEEEtran}
\bibliography{libraryECC}




\end{document}